\newtheorem{remark}{Remark}
\newtheorem{lemma}{Lemma}[section]
\newtheorem{theorem}{Theorem}[section]
\newtheorem{corollary}{Corollary}[section]
\newtheorem{definition}{Definition}
\DeclareMathOperator{\id}{id}
\newcommand{\RomanNumeralCaps}[1]
    {\MakeUppercase{\romannumeral #1}}
\begin{document}

\title[Some Poincar\'{e}--Sobolev inequalities for differential forms]{Some Poincar\'{e}--Sobolev inequalities  
\\ for differential forms}

\author{Vladimir Gol$'$dshtein}
\address{Department of Mathematics,
Ben Gurion University of the Negev,
P.O.Box 653, Beer Sheva, Israel} 
\email{vladimir@bgumail.bgu.ac.il}

\author{Yaroslav Kopylov}
\address{Sobolev Institute of Mathematics, 4 Koptyug Ave.,
630090, Novosibirsk, Russia} 
\email{yakop@math.nsc.ru; \quad yarkopylov@gmail.com}

\author{Roman Panenko}
\address{Department of Mathematics,
Ben Gurion University of the Negev,
Beer Sheva, Israel; \\
Sobolev Institute of Mathematics, 4 Koptyug Ave.,
630090, Novosibirsk, Russia} 
\email{panenkora@gmail.com}

\thanks{The work of Ya.~Kopylov and R.~Panenko was carried out in the framework of the State Task to the Sobolev
Institute of Mathematics (Project FWNF-2026-0026).}

\begin{abstract}
We continue the~study of embeddings between different classes of Sobolev
spaces of differential forms started in 2006 in a~paper by Gol$'$dshtein and
Troyanov. As in this paper, our study is based on relations
between $L_{q,p}$-cohomology and Sobolev type inequalities. The~main
results are estimates for the norms of the embedding operators for $q=p$
and $p>\frac{n-1}{k-1}$ in the~Euclidean $r$-ball $B(r)$ and its bi-Lipschitz
images. We also study the~compactness of such operators.

\vskip1pt

\textit{Key words and phrases}: 
integral cohomology, embedding operator, Poincar\'{e}--Sobolev Inequality
\end{abstract}

\maketitle

\section*{Introduction}

We begin with the~refined homotopy (averaging) operator $T$ constructed
by Iwaniec and Lutoborski (see \cite{IL93}) for a~convex bounded domain in $\mathbb{R}^{n}$. First we prove
the~compactness of this operator, which is in fact an embedding operator (see \cite{GT06}). In the~main part 
of the~present article, we deal with its weaker form, namely, the~usual homotopy operator~$S$ on a~ball in $\mathbb R^n$. This is in essence the~substance of the~Poincar\'{e} lemma, where the~operator induced by contraction to a~point is used. 

The~structure of the~paper is as follows.

In~Section~\ref{prel}, we prove the~compactness of the~Iwaniec--Lutoborski homotopy operator (see~\cite{IL93}) on the~space of $p$-integrable differential forms and discuss the~asymptotics of the~norms of the~homotopy
operator. 

In Section~\ref{homop}, 
we estimate the~norm of the~operator~$S$ in the Sobolev spaces of $p$-integrable differential $k$-forms for $p>\frac{n-1}{k-1}$ on the~open $r$-ball 
centered at the~origin (Theorem~\ref{ball}). 

In Section~\ref{gencon}, using a~modified version of the~homotopy operator,
we prove the corresponding result for bi-Lipschitz images of the unit
ball.

In Section~\ref{exam}, we estimate the~norm of~$S$ for the~standard simplex.

The Poincar\'{e}--Sobolev inequality for differential forms finds some applications in PDEs, especially 
to the~Laplace--Beltrami equation \cite{Rum}, the~Maxwell equation \cite{BS},
the~$p$-Laplace equation, and conformal spectral theory \cite{Stern}. Among geometric applications,
we should mention the~Stokes theorem for noncompact manifolds and semianalitic sets~\cite{Val}.

\section{Preliminaries}\label{prel}

Let $M$ be an~orientable Riemannian manifold.

\begin{definition}
Let $\Omega^k_p(M)$ be the~Banach space of (equivalence classes) of measurable differential 
$k$-forms with the~norm 
$$\|\omega\|_{\Omega^k_p(M)} = \Bigg\{ \int\limits_M |\omega|_x^p dx\Bigg\}^{\frac{1}{p}},
$$
where $|\omega|_x$ stands for the~modulus of a~differential form $\omega$
at a~point~$x$.
\end{definition}

Denote by $\Omega_0^k(M)$ the~space of smooth forms with compact support on~$M$ whose support is contained in~$\mathrm{int}\, M$.  

Below we understand the de Rham differential $d$ in the sense of distributions. Denote by $\Omega^k_{1,\mathrm{loc}}(M)$ the~space of differential $k$-forms whose modulus is integrable over any open bounded set in~$M$. Let $\omega \in \Omega^k_{1,\mathrm{loc}}(M)$. We refer to a~$(k+1)$-form $\eta$ as the~differential $d\omega$ of $\omega$ if
$$
\int_M \omega \wedge d\phi = (-1)^{k+1} \int_M  \eta \wedge \phi
$$
for every $\phi\in \Omega_0^{n-k-1}(M)$ with compact support contained in~$\mathrm{int}\, M$.  

\begin{definition}
We will use the~notation $\Omega^k_{p,\,p}(M)$ for the space of differential $k$-forms~$\omega$ such that 
$\omega\in \Omega^k_p(M)$ and $d\omega\in \Omega^{k+1}_p(M)$ with the~norm
$$\|\omega\|_{\Omega^k_{p,\,p}} = \|\omega\|_{\Omega^k_{p}}+\|d\omega\|_{\Omega^{k+1}_{p}}.$$
We also put
$$
Z_p^k(M) = \{ \omega\in \Omega^k_{p,\,p}(M) \,:\, d\omega=0 \};
$$
$$
B_p^k(M) = \{ \omega\in \Omega^k_p(M) \,:\, \omega=d\theta ~\text{for some}~
\theta\in \Omega^{k-1}_{p,\,p}(M)\}.
$$
\end{definition}

We denote by $\delta$ the~formal adjoint operator to~$d$.

In~\cite{IL93}, for~a~convex bounded domain~$D\subset\mathbb{R}^n$, Iwaniec and Lutoborski 
constructed 
a~homotopy operator $T:\Omega_{1,\mathrm{loc}}^k(D)\to \Omega_{1,\mathrm{loc}}^{k-1}(D)$, 
which can be expressed as follows. Let $\xi=(\xi_1,\dots,\xi_{k-1})\in \mathbb{R}^{k-1}$.
Then
$$
T\omega(x)\langle\xi\rangle = \int\limits_D 
\omega(z)\langle \zeta(z,x-z),\xi_1,\dots,\xi_{k-1}\rangle \, dz.
$$
Here the~function $\zeta:D\times \mathbb{R}^n \to \mathbb{R}^n$ is given by~the~formula
\begin{equation}\label{zeta}
\zeta(z,h)= \sum\limits_{\nu=k}^n \binom{n-k}{\nu-k} \frac{h}{|h|^\nu}
\int\limits_0^\infty s^{\nu-1} \varphi\left(z-s\frac{h}{|h|}\right) \, ds,
\end{equation}
where $\varphi\in C_0^\infty(D)$ is taken so that 
$$
\int_{\mathbb{R}^n} \varphi(y) \,dy=1 \,\, \text{and}  \,\, 
|\nabla\varphi|\le 2\mu(D)(\mathrm{diam} D)^{-n-1}.
$$
Then $\omega=d\,T\omega+Td\omega$ for~every $\omega\in \Omega_{1,\mathrm{loc}}^k(D)$.

\begin{theorem}\label{comp-op}
If $\frac{1}{p}-\frac{1}{q}<\frac{1}{n}$ then the operator
$$T :\Omega_p^k(D) \to \Omega_q^{k-1}(D)$$
is compact. 
\end{theorem}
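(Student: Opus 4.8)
The plan is to prove compactness by showing that $T$ is the composition of a bounded operator into a Sobolev space $W^{1,p}$ of forms with the compact Rellich–Kondrachov embedding into $L^q$. The key structural observation is that the Iwaniec–Lutoborski operator is a genuine smoothing operator: the kernel $\zeta(z,x-z)$ defined by \eqref{zeta} is built from a fixed mollifier $\varphi \in C_0^\infty(D)$, and its singularity as $x \to z$ is of the order $|x-z|^{1-n}$, which is integrable on the bounded domain $D$. This is precisely the Riesz-potential type singularity that gains one derivative.

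First I would make the smoothing quantitative. Writing out $T\omega$ component-wise, one sees that each component of $T\omega(x)$ has the form $\int_D K(x,z)\,\omega(z)\,dz$ where the kernel $K$ and its $x$-derivative $\nabla_x K$ satisfy pointwise bounds
$$
|K(x,z)| \le C\,|x-z|^{1-n}, \qquad |\nabla_x K(x,z)| \le C\,|x-z|^{-n},
$$
the constants depending on $n$, $k$, $\operatorname{diam} D$, and the fixed $\varphi$ (the derivative estimate uses the gradient bound $|\nabla\varphi|\le 2\mu(D)(\operatorname{diam} D)^{-n-1}$ imposed on $\varphi$). The first bound shows $T$ maps $\Omega^k_p(D) \to \Omega^{k-1}_p(D)$ boundedly via the weak Young / fractional-integration inequality, and, combined with the second, shows that $T$ in fact maps boundedly into the Sobolev space $W^{1,p}\Omega^{k-1}(D)$: differentiating under the integral sign is justified, and the potential with kernel $|x-z|^{-n}$ is handled by a standard Calderón–Zygmund / Hardy–Littlewood–Sobolev estimate on the bounded domain. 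The point is that one gains a full derivative, so the range lands in $W^{1,p}$.

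Once boundedness $T:\Omega^k_p(D)\to W^{1,p}\Omega^{k-1}(D)$ is established, compactness follows by factoring through the Rellich–Kondrachov theorem. The hypothesis $\frac1p-\frac1q<\frac1n$ is exactly the condition under which the embedding $W^{1,p}(D)\hookrightarrow L^q(D)$ is compact for a bounded (say Lipschitz, or here convex) domain $D$: when $p<n$ the Sobolev embedding gives $W^{1,p}\hookrightarrow L^{p^*}$ with $p^*=\frac{np}{n-p}$, and compactness holds for every $q<p^*$, which is equivalent to $\frac1q>\frac1p-\frac1n$; the cases $p=n$ and $p>n$ give compact embeddings into every $L^q$ and into $C^{0,\alpha}$ respectively, covering all admissible $q$. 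Applying this embedding componentwise to differential forms, the composition
$$
\Omega^k_p(D) \xrightarrow{\ T\ } W^{1,p}\Omega^{k-1}(D) \hookrightarrow \Omega^{k-1}_q(D)
$$
is compact as the composition of a bounded operator with a compact one.

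I expect the main obstacle to be the rigorous verification of the kernel derivative bound and the justification of differentiating under the integral sign near the diagonal singularity, rather than the final factorization, which is soft. In particular the $h/|h|^\nu$ factors in \eqref{zeta} together with the $s$-integral must be estimated carefully after the change of variables $h=x-z$: one must check that differentiating in $x$ produces at worst the claimed $|x-z|^{-n}$ singularity and that the resulting singular integral is of Calderón–Zygmund type (or can be split into a weakly singular part plus a principal-value part with a cancellation condition). Handling this singularity cleanly, and keeping track of the dependence of the constants on $\varphi$ and $\operatorname{diam} D$, is where the real work lies.
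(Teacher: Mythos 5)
Your argument is correct in outline, but it takes a genuinely different and technically heavier route than the paper. The paper never touches derivatives of the kernel: it decomposes $T$ componentwise into scalar integral operators $T_{(\alpha)}^\beta$ on $L^p(D)$ whose kernels $K^i_k(z,x-z)$ have only the weak polar singularity $|x-z|^{1-n}$ (since the $s$-integration is over the finite interval $[0,\operatorname{diam} D]$), and then invokes the classical Kantorovich--Akilov theorem on compactness of integral operators with weakly singular kernels, which yields compactness $L^p\to L^q$ under exactly the hypothesis $\frac1p-\frac1q<\frac1n$; a diagonal subsequence argument then assembles compactness of $T$ from that of its finitely many components. Your route instead establishes the stronger smoothing property $T:\Omega_p^k(D)\to W^{1,p}\Omega^{k-1}(D)$ and factors through Rellich--Kondrachov. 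That is a legitimate and in some ways more illuminating path (it explains the appearance of $\frac1n$ via the Sobolev exponent, and the $W^{1,p}$ bound is essentially Proposition~4.1 of Iwaniec--Lutoborski), but it forces you to control the Calder\'on--Zygmund singularity $|x-z|^{-n}$ of $\nabla_x K$, which is precisely the ``real work'' you defer, and it silently excludes the endpoint $p=1$, where singular integral operators fail to be bounded on $L^1$ while the weak-singularity compactness theorem still applies. In short: you prove more and pay more; the paper proves exactly what is needed from the zeroth-order kernel bound alone.
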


\begin{proof}
As was observed in~\cite{IL93}, since $\varphi\left(z-s\frac{h}{|h|}\right) =0$
if $s > \mathrm{diam} D$, the~integration in~\eqref{zeta} is in~fact over the~finite 
interval $0\le s\le \mathrm{diam} D$.

If $\alpha=(i_1,\dots,i_k)$ is a~strictly increasing multi-index of degree $\deg\alpha=k$ then we put $dx^\alpha=dx^{i_1}\wedge\dots\wedge dx^{i_k}$.
We denote by $\mathfrak{M}^k$ the~set of all strictly increasing multi-indices
of degree~$k$.

Every form $\omega\in \Omega_p^k(D)$ is representable as 
$\omega = \sum_{\alpha\in\mathfrak{M}^k} \omega_\alpha \, dx^\alpha$. Let
$\{e_1,\dots,e_n\}$ be the~standard basis of~$\mathbb{R}^n$. For such $\omega$, we have
$$
T\omega(x) \langle \xi_1,\dots,\xi_{k-1} \rangle 
= \biggl\{\sum\limits_{\alpha\in \mathfrak{M}^k} \sum\limits_{i=1}^n  \int_D \biggl( K^i_k(z, x-z) \,
  \omega_\alpha(z)\, dz \biggr) \, dx^\alpha \biggr\} 
\langle e_i,\xi_1,\dots,\xi_{k-1} \rangle,
$$
where 
\begin{equation}\label{k_i}
K^i_k(x, h) := \sum\limits_{\nu=k}^n \binom{n-k}{\nu-k} \frac{ h_i }{|h|^\nu}
\int\limits_0^\infty s^{\nu-1} \varphi\left(z-s\frac{h}{|h|}\right) ds.
\end{equation}
Consider the~restriction $T_{(\alpha)}$ of~$T$ to the~subspace spanned by the $k$-forms 
$\omega= f dx^\alpha$ for a~fixed multi-index~$\alpha$, which is isometric to~$L^p(D)$.
For the~component $T_{(\alpha)}^\beta(f dx^\alpha)=T\omega(x) \langle e_{i_1},\dots,e_{i_{k-1}}\rangle $ 
(the coefficient at $dx^\beta$, $\beta=(i_1,\dots,i_{k-1})$), we have
$$
T\omega(x) \langle e_{i_1},\dots,e_{i_{k-1}} \rangle 
= \biggl\{ \sum\limits_{i=1}^n  \int_D \biggl( K^i_k(z, x-z) 
 \sum\limits_{\alpha\in \mathfrak{M}^k} f(z)\, dz \biggr) \, dx^\alpha \biggr\} 
\langle e_i,e_{i_1},\dots,e_{i_{k-1}} \rangle
$$
Each of the~operators $T_{(\alpha)}^\beta$ can be regarded as an~integral operator from $L^p(D)$
into $L^q(D)$ with kernel of the form~\eqref{k_i}.
Using relations~(23) in~\cite[p.~331]{KA82}, we conclude that each of the~operators $T_{(\alpha)}^\beta$ 
is compact if $\frac{1}{p}-\frac{1}{q}<\frac{1}{n}$ in~\cite[Theorem~6, p.~332]{KA82}. It follows that 
$T_{(\alpha)}$ is compact. 
\end{proof}

As V. Goldshtein and  M. Troyanov noted in \cite{GT06}, for the~open
$r$-ball $B(r)$ in $\mathbb{R}^n$ centered at the~origin and parameters $p$, $q$ satisfying 
$$
\frac{1}{p}-\frac{1}{q} < \frac{1}{n} 
$$
such an operator
$T :\Omega^{k+1}_p(B(r)) \to \Omega_q^{k}(B(r))$ is bounded with norm at most
$$ \||x|^{1-n} \|_{L^{\frac{pq}{p+pq-q}}(B(r))}.$$
So for $p= q$ that expression can be written  in more detail  as 
\begin{align*}
\||x|^{1-n} \|_{L^1} = \int\limits_{B(r)}|x|^{1-n} dx 
= \int\limits_0^r\int\limits_{S^{n-1}} \uprho^{1-n} \cdot \uprho^{n-1}d\uprho d\Upomega \\
=  \int\limits_0^r d\uprho \cdot \int\limits_{S^{n-1}} d\Upomega=    r\int\limits_{S^{n-1}} d\Upomega 
= r \cdot \frac{2\pi^{\frac{n}{2}}}{\Gamma\big(\frac{n}{2}\big)},
\end{align*}
where $d\Upomega$ is the~volume element of the~unit sphere~$S^{n-1}$. 
Using Stirling's formula for the gamma function (see  \cite[\RomanNumeralCaps{7}, \textsection{2.3}]{Bourb}),
$$
\Gamma(x) \sim \frac{x^x\sqrt{2\pi{x}}}{e^x},
$$
we can see that
$$
\frac{2\pi^{\frac{n+2}{2}}}{\Gamma\big(\frac{n+2}{2}\big)} \sim \frac{2\pi^{\frac{n+2}{2}} \cdot e^{\frac{n}{2}}}{\sqrt{\pi{n}} \cdot(\frac{n}{2})^\frac{n}{2}}.
$$
Thus, $\frac{2\pi^{\frac{n+2}{2}}}{\Gamma\big(\frac{n+2}{2}\big)}$ tends to zero as $n\to\infty$ faster
than $O\big((\frac{n}{2\pi^2})^{-\frac{n+1}{2}}\big)$.
This type of asymptotics could be useful for problems concerning
generalized coordinates in the configuration space of a system, where
$n$ could be pretty large.  

Below we consider the~properties of the~operator  
\begin{equation}\label{mainop}
S\omega(x)\langle \xi_1,\dots,\xi_{k-1}\rangle = \int\limits_0^1 t^{k-1}\omega(tx) 
\langle x,\,\xi_1,\dots,\,\xi_{k-1} \rangle dt
\end{equation}
acting on differential $k$-forms defined on the~open $r$-ball centered at the~origin.
Assuming for the simplification of computations that it acts on $\Omega_p^{k+1}(\mathbb{R}^{n+1}$), this operator has the~norm estimated by the~constant 
$$
C = \frac{  \sqrt{\binom{n+1}{k+1}}}{(pk - n)^{\frac{1}{p}}}.
$$
of Theorem~\ref{ball}. We infer
\begin{align*}
C^2 
 \sim &\frac{\sqrt{2\pi (n+1)} \cdot (n+1)^{n+1}}{ (pk - n)^{\frac{2}{p}} e^{n+1}}
 \frac{e^{k+1}}{\sqrt{2\pi (k+1)}\cdot (k+1)^{k+1}} \frac{e^{n-k+1}}{\sqrt{2\pi (n-k+1)}\cdot (n-k+1)^{n-k+1}}\\
 \sim& \frac{e}{(pk - n)^{\frac{2}{p}}\sqrt{2\pi}}\frac{(n+1)^\frac{2n+3}{2}}{(k+1)^\frac{2k+3}{2} (n-k+1)^\frac{2(n-k)+7}{2}}
\end{align*}
Finally, we have
$$
 \frac{  \sqrt{\binom{n+1}{k+1}}}{(pk - n)^{\frac{1}{p}}} 
 \sim \frac{\sqrt{e}}{(pk - n)^{\frac{1}{p}}(2\pi)^{\frac{1}{4}}}\frac{(n+1)^\frac{2n+3}{4}}{(k+1)^\frac{2k+3}{4} (n-k+1)^\frac{2(n-k)+7}{4}}
$$
Therefore, $C$ grows no faster than $O(n^{\frac{k}{2}-1})$ for suitable~$p$
(see the dependence of the constant on $p$ in the table ). Below we give the~values of this~constant 
for some parameters. 
\vskip 10pt
\begin{center}
\begin{longtable}{|c|c|c|c|}
\hline
\multicolumn{4}{|c|}{Dimension $n = 2$} \\
\hline
$k$&$p=2$ & $p=2.5$ & $p = 10 $\\
\hline
$1$&---  &--- &---\\
$2$& 1.0000& 0.8503& 0.8027\\
\hline
\multicolumn{4}{|c|}{Dimension $n = 3$} \\
\hline
$k$&$p=2$ & $p = 2.5$ & $p = 10$\\
\hline
$1$&--- & --- &---\\
$2$& ---&   2.2855&1.4069\\
$3$& 0.7071& 0.6444 &0.7490\\
\hline
\multicolumn{4}{|c|}{Dimension $n = 4$} \\
\hline
$k$&$p=2$ & $p=2.5$ &$p=10$\\
\hline
$1$& --- & ---  & ---\\
$2$&--- &  ---&2.0163\\
$3$&2.0000 &  1.5157&1.5066\\
$4$& 0.5774&0.5479  &0.7192\\
\hline
\multicolumn{4}{|c|}{Dimension $n = 5$} \\
\hline
$k$ &$p=2$ & $p=2.5$ & $p=10$ \\
\hline
$1$& --- & ---  & --- \\
$2$& --- & --- &2.6435\\
$3$& --- & 3.1623 & 2.3966\\
$4$& 1.5811 &  1.3548&1.6143\\
$5$& 0.5000&  0.4884&0.6988\\
\hline
\multicolumn{4}{|c|}{Dimension $n = 6$} \\
\hline
$k$ &$p=2$ & $p=2.5$ & $p=10$ \\
\hline
$1$& --- & ---  & --- \\
$2$& --- & ---  &3.2972\\
$3$& --- & ---&3.4112 \\
$4$& 3.8730&  2.6845&2.8071\\
$5$& 1.4142&1.2867 & 1.7166\\
$6$& 0.4472&  0.4467&0.6834\\
\hline
\multicolumn{4}{|c|}{Dimension $n = 7$} \\
\hline
$k$ &$p=2$ & $p=2.5$ & $p=10$ \\
\hline
$1$& --- & ---  & --- \\
$2$& --- & ---  &3.9894\\
$3$& --- & ---& 4.5438\\
$4$&--- & 5.0303 &4.3054\\
$5$&3.2404 & 2.6320&3.2208 \\
$6$&1.3229 &  1.2514&1.8122\\
$7$&0.4082&0.4152&0.6711\\
\hline
\multicolumn{4}{|c|}{Dimension $n = 10$} \\
\hline
$k$ &$p=2$ & $p=2.5$ & $p=10$ \\
\hline
$1$& --- & ---  & --- \\
$2$& --- & ---  &6.7082\\
$3$& --- & ---& 8.6189\\
$4$&--- &  --- &10.6878\\
$5$& ---  &15.8745&11.2607 \\
$6$& 14.4914 &8.7798 &9.9961\\
$7$&6.3246&5.3497&7.3932\\
$8$&3.0000&2.8500&4.4471\\
$9$&1.1952&1.2118 &2.0648\\
$10$&0.3333&0.3531&0.6444\\
\hline
\end{longtable}
\end{center}

Despite the fact that  $\frac{2\pi^{\frac{n+2}{2}}}{\Gamma\big(\frac{n+2}{2}\big)}$ tends to zero as $n\to\infty$
at a~high rate but $\frac{  \sqrt{\binom{n+1}{k+1}}}{(pk - n)^{\frac{1}{p}}}$ does not, for the low-dimensional 
case, we have  
\vskip 10pt
\begin{center}
\scalebox{0.85}{
\begin{tabular}{|c|c|c|c|c|c|c|c|c|c|}
\hline
n&2&3&4&5&6&7&8&9&10\\
\hline
$\frac{2\pi^{\frac{n+2}{2}}}{\Gamma\big(\frac{n+2}{2}\big)}$&6.2832&12.5664&19.7392&26.3189&31.0063&33.0734&32.4697&29.6866&25.5016\\
\hline
\end{tabular}
}
\end{center}
\vskip 10pt
We can see that the~constant  from \cite{GT06} is not a~nicely behaved one (compare the~values with the previous table) at least for low dimensions
\vskip 15pt
\begin{center}
\begin{tikzpicture}
\begin{axis}[
	axis lines=middle,
	enlargelimits=true,
	ymax = 31,
	ymin = 0,
	xmax = 11,
	xmin = 0,
	xlabel = {$n$},
	ylabel = {$\frac{2\pi^{\frac{n+2}{2}}}{\Gamma\big(\frac{n+2}{2}\big)}$},
	minor tick num = 2
]
\addplot[smooth,black, line width=1.1] table {
	x     y
       2   6.2832
      3   12.5664
      4	   19.7392
      5    26.3189
      6   31.0063
      7    33.0734
      8    32.4697
      9   29.6866
      10  25.5016
  };
\end{axis}
\end{tikzpicture}
\end{center}
\section{The Homotopy Operator}\label{homop}
In the present paper, we primarily work with spaces of differential forms defined on bounded domains 
in $\mathbb R^n$.  Moreover, we require such a~domain to be bi-Lipschitz homeomorphic to an open ball.    
As we said above, we begin with a homotopy operator on a ball in $\mathbb R^n$.
Given a differential $k$-form
$$
\omega = \sum_{j=(j_1,\dots,j_k)} f_{j_1,\dots,\,j_k} dx_{j_1}\wedge\dots \wedge dx_{j_{i}}\wedge \dots \wedge dx_{j_k}.
$$
Then, in the~local coordinates, the operator~\eqref{mainop} 
has the~form 
$$
S\omega = \sum_j \Bigg\{\int\limits_0^1f_{j_1,\dots,\,j_k}(tx)t^{k-1} dt \sum_i (-1)^{i}x_{j_i} dx_{j_1}\wedge\dots \wedge dx_{j_{i-1}}\wedge dx_{j_{i+1}}\wedge \dots \wedge dx_{j_k} \Bigg\}.
$$

In Theorem~\ref{interv} below, we deal with the~case of a~one-dimensional interval, whereas Theorem~\ref{ball} is concerned with the~case of a~ball
in~$\mathbb{R}^n$.

\begin{theorem}\label{interv}
If $]-r,\,r[$ is an~open interval and $\frac{1}{p}-\frac{1}{q} \le 1$ then 
$$
S \colon \Omega^1_p \big( ]-r,\,r[ \big)  \to \Omega^0_q\big( ]-r,\,r[ \big) 
$$
is well defined and
$$
\|S\omega \|_{\Omega^0_q \big(]-r,\,r[\big)} \le 2r\|\omega \|_{\Omega^1_p\big(]-r,\,r[\big)} .
$$
\end{theorem}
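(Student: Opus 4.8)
The plan is to reduce the statement to an elementary one-variable estimate. In dimension one a $1$-form is simply $\omega = f\,dx$ with $f\in L^p(]-r,r[)$, and since $|dx|=1$ we have $|\omega(x)|=|f(x)|$, so that $\|\omega\|_{\Omega^1_p}=\|f\|_{L^p(]-r,r[)}$. First I would substitute $k=1$, $n=1$ into the defining formula for $S$. There is no $\xi$ to insert, the weight $t^{k-1}$ equals $1$, and the contraction gives $dx\langle x\,\partial_x\rangle=x$, so the definition collapses to $S\omega(x)=x\int_0^1 f(tx)\,dt$. The change of variable $u=tx$ then identifies $S\omega$ with the primitive
$$
S\omega(x)=\int_0^x f(u)\,du,
$$
which is the one-dimensional Poincar\'{e} lemma $d(S\omega)=\omega$ in disguise.

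Next I would obtain a pointwise sup bound. For any $x\in\,]-r,r[$, irrespective of the sign of $x$, the interval of integration is contained in $]-r,r[$, whence
$$
|S\omega(x)|=\Bigl|\int_0^x f(u)\,du\Bigr|\le\int_{-r}^{r}|f(u)|\,du=\|f\|_{L^1(]-r,r[)}.
$$
Thus $\|S\omega\|_{L^\infty}\le\|f\|_{L^1}$. Two applications of H\"older's inequality on the interval of length $2r$ then convert the mismatched exponents: on the one hand $\|f\|_{L^1}\le(2r)^{1-\frac1p}\|f\|_{L^p}$, and on the other $\|S\omega\|_{L^q}\le(2r)^{\frac1q}\|S\omega\|_{L^\infty}$.

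Chaining these three inequalities gives
$$
\|S\omega\|_{\Omega^0_q}\le(2r)^{\frac1q}(2r)^{1-\frac1p}\|f\|_{L^p}=(2r)^{1-(\frac1p-\frac1q)}\|\omega\|_{\Omega^1_p}.
$$
The hypothesis $\frac1p-\frac1q\le1$ is exactly what forces the exponent $1-(\frac1p-\frac1q)$ to be nonnegative, so the operator is bounded from $\Omega^1_p$ into $\Omega^0_q$; on the diagonal $p=q$ the exponent equals $1$ and one recovers the asserted constant $2r$ precisely. The argument is otherwise routine, and the only point needing attention---the \emph{hard part}, such as it is---is the bookkeeping of the powers of $2r$ produced by the two H\"older steps, together with the harmless sign ambiguity in the primitive, neither of which affects the norm. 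This one-dimensional computation is the prototype for the higher-dimensional estimate on the ball, where the same homotopy operator is controlled component by component.
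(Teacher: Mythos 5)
Your proof is correct and reaches the same place as the paper, but by a somewhat more elementary route. The paper does not integrate out the primitive explicitly: it dominates $|S\omega(x)|$ by the convolution $\bigl(|f|*\mathbf{1}_{]-r,r[}\bigr)(x)$ and then invokes Young's inequality $\|f*g\|_q\le\|f\|_p\|g\|_l$ with $\frac1p+\frac1l=1+\frac1q$, the factor $\|\mathbf{1}\|_l=(2r)^{1/l}$ supplying the constant. You instead identify $S\omega(x)=\int_0^x f(u)\,du$, bound it in $L^\infty$ by $\|f\|_{L^1}$, and interpolate with two H\"older steps; since one convolution factor is just an indicator, this is essentially Young's inequality unpacked by hand, so the two arguments are morally the same and produce the identical constant $(2r)^{1/l}=(2r)^{1-(\frac1p-\frac1q)}$. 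What your version buys is transparency about that constant: you note correctly that it equals $2r$ only on the diagonal $p=q$, whereas for $q<p$ and $2r>1$ it exceeds $2r$ (e.g.\ $p=2$, $q=1$ gives $(2r)^{3/2}$). The paper's own proof has exactly the same output but silently replaces $(2r)^{1/l}$ by $2r$ in the final line, so the discrepancy you flag is a (minor) imprecision in the theorem's statement rather than a defect of your argument. One could also observe that the hypothesis $\frac1p-\frac1q\le1$ is automatically satisfied for $p,q\in[1,\infty]$, in both treatments it serves only to make the Young/H\"older exponent admissible.
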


\begin{proof}
For the case $n=1$, the operator $S$ acts by the~formula
$$
S\colon\{f(x)dx\} \mapsto \Big\{ \int\limits_0^1 x f(tx)dt\Big\}.
$$
Consider the~constant function 
$$
{\bf 1} \colon x \mapsto 1, ~x\in B(r). 
$$
Then the expression for the norm of $S\omega$ looks as
\begin{align*}
\|S\omega\|_q^q &= \int\limits_{B(r)} \Bigg|  \int\limits_0^1 x f(tx)dt\Bigg|^q dx \le  \int\limits_{B(r)} \Bigg(  \int\limits_0^1 |x| \cdot |f(tx)|dt\Bigg)^q dx \\
& = \int\limits_{B(r)} \Bigg(  \int\limits_0^1  {\rm sgn}(x) \cdot {\bf 1}(x(1-t)) \cdot  |f(tx)| xdt\Bigg)^q dx.  
\end{align*}
Performing the~change of variable $z = tx$, we infer 
$$
 \int\limits_0^1  {\rm sgn}(x) \cdot {\bf 1}(x(1-t)) \cdot  |f(tx)| xdt =  \int\limits_0^x {\bf 1}(x-z) \cdot  |f(z)| dz \le  \int\limits_{B(r)} {\bf 1}(x-z) \cdot  |f(z)| dz,
$$
where the last inequality follows from the~relation ${\bf 1}(x-z) \cdot  |f(z)|  > 0$.
As a result, we obtain 
$$
\|S\omega\|_q^q  \le  \int\limits_{B(r)}  \Bigg(\int\limits_{B(r)} {\bf 1}(x-z) \cdot  |f(z)| dz \Bigg)^qdx = \||\omega|*{\bf 1}\|^q_q.
$$
Now we can apply Young's inequality. Suppose that $p,\,l,\,s \in [1,\infty]$ and 
$\frac{1}{p}+\frac{1}{l} = 1+ \frac{1}{s}$. Then 
$$
\|f*g\|_s\le\|f\|_p\|g\|_l.
$$
For the expression in question, we get
$$
\||\omega|*{\bf 1}\|_q \le \|{\bf 1}\|_l \||\omega|\|_p
$$
where 
$$
\frac{1}{p}+\frac{1}{l} = 1+\frac{1}{q}, 
$$
or, equivalently,  
$$
\frac{1}{p}-\frac{1}{q} =1- \frac{1}{l}.
$$
where $l \in [1,\,\infty]$. Given parameters $p$ and $q$ such that
$$
\frac{1}{p}-\frac{1}{q} \le 1,
$$
the operator $S$ satisfies the~inequality  
$$
\|S\omega\|_{\Omega^0_q \big(]-r,\,r[\big)} \le 2r\|\omega \|_{\Omega^1_p\big(]-r,\,r[\big)} 
$$

\end{proof}

\begin{theorem}\label{ball}
Let $B(r)\subset \mathbb R^n$, $n\ge 2$, be the~open $r$-ball centered at the~origin 
and $p> \frac{n-1}{k-1}$,~$k\ge2$. Then
$$
S \colon \Omega^{k}_p \big( B(r)\big)  \to \Omega^{k-1}_p\big( B(r) \big) 
$$
is well defined and
$$
 \|S\omega\|_{\Omega^{k-1}_p(B(r))} 
\le  \frac{  r\sqrt{\binom{n}{k}}}{(p(k-1) - n+1)^{\frac{1}{p}}}  \|\omega\|_{\Omega^{k}_p(B(r))}.
$$
\end{theorem}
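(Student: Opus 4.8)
The plan is to reduce the whole statement to one elementary radial integral by first producing a clean pointwise bound on $|S\omega|$. Write $X=\sum_i x_i\,\partial/\partial x_i$ for the radial (Euler) vector field, so that $|X(x)|=|x|$, and set $F(x)=\int_0^1 t^{k-1}\omega(tx)\,dt$ for the averaged $k$-form; the definition of $S$ says exactly that $S\omega=\iota_X F$. I would record two ingredients. Since the basis $(k-1)$-forms occurring in $\iota_X\,dx^\alpha=\sum_{i=1}^{k}(-1)^{i-1}x_{j_i}\,dx^{\alpha\setminus j_i}$ are orthonormal, one gets $|\iota_X\,dx^\alpha|^2=\sum_{i=1}^k x_{j_i}^2\le|x|^2$; summing over the $\binom{n}{k}$ multi-indices $\alpha$ and applying Cauchy--Schwarz yields $\sum_\alpha|F_\alpha(x)|\le\sqrt{\binom{n}{k}}\,|F(x)|$. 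Together with the integral form of Minkowski's inequality, which gives $|F(x)|\le\int_0^1 t^{k-1}|\omega(tx)|\,dt$, these combine to the key estimate
$$
|S\omega(x)|=\Bigl|\sum_\alpha F_\alpha(x)\,\iota_X\,dx^\alpha\Bigr|\le\sqrt{\binom{n}{k}}\,|x|\int_0^1 t^{k-1}|\omega(tx)|\,dt .
$$

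Next I would feed this into the $L^p$ norm. On $B(r)$ one may replace $|x|$ by $r$, and Hölder's inequality on $[0,1]$ against the constant function (equivalently, Jensen for the probability measure $dt$) gives $\bigl(\int_0^1 t^{k-1}|\omega(tx)|\,dt\bigr)^p\le\int_0^1 t^{(k-1)p}|\omega(tx)|^p\,dt$. Hence, by Tonelli,
$$
\|S\omega\|_p^p\le\binom{n}{k}^{\!p/2}r^p\int_0^1 t^{(k-1)p}\Bigl(\int_{B(r)}|\omega(tx)|^p\,dx\Bigr)\,dt .
$$
The substitution $y=tx$ turns the inner spatial integral into $t^{-n}\int_{B(tr)}|\omega(y)|^p\,dy\le t^{-n}\|\omega\|_p^p$, because $B(tr)\subseteq B(r)$. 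It then only remains to evaluate the one-dimensional integral $\int_0^1 t^{(k-1)p-n}\,dt=\bigl((k-1)p-n+1\bigr)^{-1}$; taking $p$-th roots produces exactly the asserted factor $r\sqrt{\binom{n}{k}}\big/\bigl(p(k-1)-n+1\bigr)^{1/p}$.

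The only genuine obstacle is the integrability of $t^{(k-1)p-n}$ at the origin: this last integral converges precisely when $(k-1)p-n>-1$, that is $p>\frac{n-1}{k-1}$, which is where the hypothesis enters and why $k\ge 2$ is required (for $k=1$ and $n\ge2$ the exponent is $-n\le-1$). The same finiteness guarantees a posteriori that $S\omega\in\Omega^{k-1}_p(B(r))$, so that $S$ is indeed bounded between the stated spaces. I would also note that the factor $\sqrt{\binom{n}{k}}$ is merely an artifact of the triangle-inequality passage from $F$ to its components; the sharp contraction bound $|\iota_X F|\le|x|\,|F|$ would remove it, but the component-wise estimate is the one that will survive the bi-Lipschitz change of coordinates used in the later sections, so it is worth keeping in this form.
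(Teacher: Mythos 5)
Your argument is correct and follows essentially the same route as the paper's proof: the pointwise bound $\bigl|\iota_X\,dx^\alpha\bigr|=\bigl(\sum_i x_{j_i}^2\bigr)^{1/2}\le|x|$, the $\ell^1$--$\ell^2$ comparison producing the factor $\sqrt{\binom{n}{k}}$, Jensen's inequality in $t$, the dilation $y=tx$ with Jacobian $t^n$, and the radial integral $\int_0^1 t^{(k-1)p-n}\,dt$ whose convergence is exactly the hypothesis $p>\frac{n-1}{k-1}$. The only differences are cosmetic (you work with $k$-forms directly rather than proving the $(k+1)$-form version and re-indexing, and you apply Cauchy--Schwarz before Minkowski instead of after), so no further comment is needed.
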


\begin{proof}
Below in this proof, for convenience, we apply the operator $S$ to $(k+1)$-forms
$$
\omega = \sum_{j=(j_0,\dots,j_k)} f_{j_0,\dots,\,j_k} dx_{j_0}\wedge\dots \wedge dx_{j_{i}}\wedge \dots \wedge dx_{j_k},
$$
that is,  
$$
S \colon \Omega^{k+1}_p \big( B(r)\big)  \to \Omega^{k}_p\big( B(r) \big),
$$
$$
S\omega = \sum_j \Bigg\{\int\limits_0^1f_{j_0,\dots,\,j_k}(tx)t^{k-1} dt \sum_i (-1)^{i}x_{j_i} dx_{j_0}\wedge\dots \wedge dx_{j_{i-1}}\wedge dx_{j_{i+1}}\wedge \dots \wedge dx_{j_k} \Bigg\}.
$$
The~triangle inequality implies
$$
|S\omega|_x
\le \sum_j   \Bigg\{ \Big|  \int\limits_0^1 f_{j_0,\dots,\,j_k}(tx)  t^{k} dt \Big| \cdot \Big|\sum_i (-1)^i x_{j_i} dx_{j_0}\wedge\dots \wedge dx_{j_{i-1}}\wedge dx_{j_{i+1}}\wedge \dots \wedge dx_{j_k}\Big|_x\Bigg\}.
$$
Then we can calculate 
$$
\Big|\sum_i (-1)^i x_{j_i} dx_{j_0}\wedge\dots \wedge dx_{j_{i-1}}\wedge dx_{j_{i+1}}\wedge \dots \wedge dx_{j_k}\Big|_x =  \sqrt{\sum_i  x^2_{j_i}}
$$
Obviously,
$$
\sqrt{\sum_i  x^2_{j_i}} \le |x|.
$$
Hence,
\begin{align*}
|S\omega|_x 
& \le \sum_j   \Bigg\{ \Big|  \int\limits_0^1 f_{j_0,\dots,\,j_k}(tx)  t^{k} dt \Big| \cdot  \sqrt{\sum_i  x^2_{j_i}} \Bigg\} 
\\
& \le |x| \sum_j  \Big|  \int\limits_0^1 f_{j_0,\dots,\,j_k}(tx)  t^{k} dt \Big| 
\le |x| \int\limits_0^1    t^{k} \sum_j |f_{j_0,\dots,\,j_k}(tx)|  dt 
\end{align*}
Since $|x|< r$ for all points of the $r$-ball, we finally obtain 
$$
|S\omega|_x \le r \int\limits_0^1    t^{k} \sum_j |f_{j_0,\dots,\,j_k}(tx)|  dt
$$
Consider two finite-dimensional normed spaces $\big\langle \mathbb R^m,\,\|\cdot\|_{2}\big\rangle$ and $\big\langle \mathbb R^m,\,\|\cdot\|_{1}\big\rangle$.

For every $v \in  \mathbb R^m$, we have 
$$
\|v\|_{1}   \le  \sqrt{m}\|v\|_{2}
$$
Consider the~family of functions 
$$
f_{j_0,\dots,\,j_k} \colon B(r)\subset \mathbb R^n \to \mathbb R.
$$
It defines an $n$-dimensional surface in $\mathbb R^{\binom{n}{k+1}}$, which is realized as the~graph
$$
\mathcal F\colon x \mapsto  (f_{0,\dots,\,k}(x), \dots,\,f_{j_0,\dots,\,j_k}(x),\dots,\,f_{n-k-1,\dots,\,n}(x))
$$
and so, reproducing the previous argument, for every point 
$\mathcal F(x) \in \mathcal F(\mathbb R^n)\subset \mathbb R^{\binom{n}{k+1}}$, we obtain
$$
 \sum_j |f_{j_0,\dots,\,j_k}(x)| \le \sqrt{\binom{n}{k+1}} \sqrt{  \sum_j |f_{j_0,\dots,\,j_k}(x)|^2 } = \sqrt{\binom{n}{k+1}}|\omega|_x.
$$
And then
$$
|S\omega|_x \le r\int\limits_0^1    t^{k} \sum_j |f_{j_0,\dots,\,j_k}(tx)|  dt \le  r\sqrt{\binom{n}{k+1}}  \int\limits_0^1    t^{k}  |\omega|_{tx}  dt.
$$
Let us estimate the norm $\|S\omega\|_{\Omega_p(B(r))}$:
$$
\Bigg(\int\limits_{B(r)}|S\omega|_x^p dx\Bigg)^{\frac{1}{p}} \le r \sqrt{\binom{n}{k+1}} \Bigg( \int\limits_{ B(r)} \Big\{ \int\limits_0^1    t^{k}  |\omega|_{tx}  dt \Big\}^p dx\Bigg)^{\frac{1}{p}}
$$
Let us take a~look at
$$
\int\limits_{B(r)} \Big\{ \int\limits_0^1    t^{k}  |\omega|_{tx}  dt \Big\}^p dx.
$$
Applying Jensen's inequality in the~inner integral, we infer
\begin{align*}
\int\limits_{B(r)} \Big\{ \int\limits_0^1    t^{k}  |\omega|_{tx}  dt \Big\}^p dx \le \int\limits_{B(r)} \int\limits_0^1    t^{pk}  |\omega|^p_{tx}  dt  dx = \int\limits_0^1 \int\limits_{B(r)}   t^{pk}  |\omega|^p_{tx}dxdt. 
\end{align*}
We can consider the~map
$$
{ B(r)} \to {B(tr)}
$$
defined as
$$
x \mapsto t\cdot x,   \quad t\in [0,\,1].
$$ 
The~Jacobian matrix of this map is equal to
$$
J_t = \begin{bmatrix}
    t & &0 \\
    & \ddots & \\
   0 & & t
  \end{bmatrix}\, , 
$$
and 
$$
\det J_t = t^n.
$$
Let $k\ge1 $. Changing variables in the~inner integral, we have 

\begin{align*}
\int\limits_0^1 \int\limits_{B(r)}   t^{pk}  |\omega|^p_{tx}dxdt 
=  \int\limits_0^1 \int\limits_{B(tr)}   t^{pk - n}  |\omega|^p_{v}dvdt \le  \int\limits_0^1 t^{pk - n} dt \cdot \int\limits_{B(r)}     |\omega|^p_{v}dv
\end{align*}
Observe that
$$
 \int\limits_0^1 t^{pk - n} dt  = \frac{1}{pk - n+1}
$$
for
$$
pk-n>-1, ~p> \frac{n-1}{k}
$$
So we can conclude that 
$$
 \|S\omega\|_{\Omega^{k}_p(B(r))} \le \frac{  r\sqrt{\binom{n}{k+1}}}{(pk - n+1)^{\frac{1}{p}}} \|\omega\|_{\Omega^{k+1}_p(B(r))}
$$
for $p> \frac{n-1}{k}$.

This can be reformulated for $S$ acting on $k$-forms
$$
S \colon \Omega^{k}_p \big( B(r)\big)  \to \Omega^{k-1}_p\big( B(r) \big) 
$$
as follows:
$$
 \|S\omega\|_{\Omega^{k-1}_p(B(r))} 
\le  \frac{  r\sqrt{\binom{n}{k}}}{(p(k-1) - n+1)^{\frac{1}{p}}}  \|\omega\|_{\Omega^{k}_p(B(r))}.
$$

\end{proof}
\begin{remark} For the~case of 1-forms, we have 
$$
\|S\omega\|_{\Omega^0_p(B(r))} \le 
\frac{r^{1+\frac{n-1}{p}}\sqrt{n} }{(p+n-1)^{\frac{1}{p}}} \|\omega\|_{\Omega^1_p(B(r))}
$$
\end{remark}

\begin{proof}
By the~above considerations, we obtain 
$$
\Bigg(\int\limits_{B(r)}|S\omega|_x^p dx\Bigg)^{\frac{1}{p}} \le \sqrt{n} \Bigg( \int\limits_{ B(r)} \Big\{ \int\limits_0^1    |x| \cdot  |\omega|_{tx}  dt \Big\}^p dx\Bigg)^{\frac{1}{p}}.
$$
Therefore, it remains to estimate the~double integral. We infer
\begin{align*}
&\int\limits_{ B(r)} \Big\{ \int\limits_0^1    |x| \cdot  |\omega|_{tx}  dt \Big\}^p dx 
\le \int\limits_{ B(r)}  \int\limits_0^1    |x|^p \cdot  |\omega|_{tx}^p  dt\,  dx \\
&= \int\limits_{S^{n-1}} \int\limits_0^r 
\Big\{  \int\limits_0^1   \uprho^p \cdot  |\omega|_{tx(\uprho,\Upomega)}^p  dt\Big\} 
\uprho^{n-1}d\uprho\, d\Upomega  
= \int\limits_{S^{n-1}} \int\limits_0^r  
\int\limits_0^1  \uprho^{p+n-1} \cdot  |\omega|_{tx(\uprho,\Upomega)}^p  dt d\uprho\, d\Upomega \\
&=  \int\limits_{S^{n-1}} \int\limits_0^r  \uprho^{p+n-2} \Big\{ \int\limits_0^{\uprho} |\omega|_{\langle z,\,\Upomega\rangle}^p  dz \Big\} d\uprho\, d\Upomega 
=  \int\limits_0^r  \uprho^{p+n-2} \Big\{   \int\limits_{S^{n-1}} \int\limits_0^{\uprho} |\omega|_{\langle z,\,\Upomega\rangle}^p  dz\,d\Upomega  \Big\} d\uprho \\
&\le \|\omega\|^p_{\Omega^1_p(B(r))} \int\limits_0^r  \uprho^{p+n-2} d\uprho = \frac{r^{p+n-1} }{p+n-1} \|\omega\|^p_{\Omega^1_p(B(r))} 
\end{align*}
\end{proof}

\section{The~General Construction}\label{gencon}

Let $U$ and $V$ be subsets of $\mathbb R^n$ and let
$$
\omega = \sum_j f_{j_1,\dots,\,j_k} dy_{j_1}\wedge\dots \wedge dy_{j_{i}}\wedge \dots \wedge dy_{j_k} 
$$
be a differential $k$-form presented in local coordinates on $V$. 
Assume that 
$$
\varphi \colon U \to V
$$
is a~Lipschitz map 
$$
 \begin{pmatrix}
   x_1\\
   x_2\\
   \dots\\
   x_{n}
  \end{pmatrix}
\mapsto 
\begin{pmatrix}
   y_1\\
   y_2\\
   \dots\\
   y_{n}
  \end{pmatrix}
  $$
and $C$ is its Lipschitz constant, that is,
$$
|\varphi(x) -\varphi(x^{\prime})|\le C|x - x^{\prime}|.
$$  
It is well known that $\varphi$ has partial derivatives almost everywhere.
We put 
$$  
y_i = \varphi_i(x_1,\dots,\,x_n).
$$
Clearly,
$$
\frac{\partial \varphi_j }{\partial x_i}^2 \le \sum_j \frac{\partial \varphi_j }{\partial x_i}^2 = \Bigg| \frac{\partial}{\partial x_i}\varphi \Bigg|^2.
$$
Then the~relation 
$$
\frac{|\varphi(x) -\varphi(x^{\prime})|}{|x - x^{\prime}|}\le C
$$
implies  
$$
\Bigg| \frac{\partial \varphi_j }{\partial x_i}(x) \Bigg| \le C.
$$
We can represent the~pull-back of $\omega$ under $\varphi$ in local coordinates on $U$ as follows:
$$
\varphi^*\omega = \sum_j f_{j_1,\dots,\,j_k}(\varphi(x)) 
\Bigg\{ \sum_i \frac{\partial( \varphi_{j_1}, \dots, \varphi_{j_k} )}{\partial (x_{i_1},\dots,  x_{i_k})}(x) dx_{i_1}\wedge\dots \wedge dx_{i_k} \Bigg\}.
$$
Here $\frac{\partial( \varphi_{j_1}, \dots, \varphi_{j_k} )}{\partial (x_{i_1},\dots,  x_{i_k})}(x)$ stands for the~Jacobian of the~``numerator''
with respect to the~``denominator'' at a~point~$x$. 

\begin{lemma}
Let $U$ and $V$ be subsets of $\mathbb R^n$ and let 
$$
\varphi \colon U \to V
$$
be  a~ $C$-Lipschitz map. Then
$$
|\varphi^*\omega |_x\le \frac{n!\cdot C^k}{(n-k)!} \cdot|\omega|_{\varphi(x)} 
$$
for every differential $k$-form~$\omega$.
\end{lemma}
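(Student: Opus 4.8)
The plan is to reduce the pointwise inequality to a bound on the coefficients of $\varphi^*\omega$ in the standard orthonormal basis $\{dx_{i_1}\wedge\dots\wedge dx_{i_k}\}_{i_1<\dots<i_k}$ of $\Lambda^k(\mathbb R^n)^*$. From the expression for $\varphi^*\omega$ displayed just above the lemma, the coefficient of $dx_I$ (for an increasing multi-index $I=(i_1<\dots<i_k)$) is
$$
c_I(x)=\sum_J f_J(\varphi(x))\,M_{J,I}(x),\qquad M_{J,I}(x)=\det\frac{\partial(\varphi_{j_1},\dots,\varphi_{j_k})}{\partial(x_{i_1},\dots,x_{i_k})},
$$
where $J=(j_1<\dots<j_k)$ runs over increasing multi-indices and $M_{J,I}$ is the corresponding $k\times k$ Jacobian minor. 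Since this basis is orthonormal for $|\cdot|_x$, we have $|\varphi^*\omega|_x^2=\sum_I c_I(x)^2$ and $|\omega|_{\varphi(x)}^2=\sum_J f_J(\varphi(x))^2$, so the lemma becomes a statement about the two finite families of numbers $\{f_J\}$ and $\{M_{J,I}\}$.

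First I would apply the Cauchy--Schwarz inequality to the inner sum defining each $c_I$:
$$
c_I(x)^2=\Big(\sum_J f_J(\varphi(x))\,M_{J,I}(x)\Big)^2\le\Big(\sum_J f_J(\varphi(x))^2\Big)\Big(\sum_J M_{J,I}(x)^2\Big)=|\omega|_{\varphi(x)}^2\sum_J M_{J,I}(x)^2.
$$
Summing over all $I$ then gives $|\varphi^*\omega|_x^2\le|\omega|_{\varphi(x)}^2\sum_{I,J}M_{J,I}(x)^2$, so it remains only to estimate the purely geometric quantity $\sum_{I,J}M_{J,I}^2$.

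For that, I would use the pointwise bound $\bigl|\tfrac{\partial\varphi_j}{\partial x_i}(x)\bigr|\le C$ established above together with the Leibniz expansion of a determinant. Each minor $M_{J,I}$ is a sum of $k!$ signed products of $k$ partial derivatives, every factor bounded in absolute value by $C$, so $|M_{J,I}|\le k!\,C^k$ and hence $M_{J,I}^2\le(k!)^2C^{2k}$. There are exactly $\binom{n}{k}$ increasing multi-indices of type $I$ and $\binom{n}{k}$ of type $J$, whence
$$
\sum_{I,J}M_{J,I}^2\le\binom{n}{k}^2(k!)^2C^{2k}=\Big(\frac{n!}{(n-k)!}\Big)^2C^{2k},
$$
using $\binom{n}{k}k!=\frac{n!}{(n-k)!}$. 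Combining this with the previous display and taking square roots yields $|\varphi^*\omega|_x\le\frac{n!\,C^k}{(n-k)!}\,|\omega|_{\varphi(x)}$, as claimed.

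I expect the only genuinely delicate points to be bookkeeping rather than conceptual: being consistent about summing over increasing (rather than ordered) multi-indices, so that the basis really is orthonormal and the counting factor $\binom{n}{k}^2$ is correct, and checking that the crude Leibniz estimate $|M_{J,I}|\le k!\,C^k$ is exactly what reproduces the stated constant. A sharper route via Hadamard's inequality would give the smaller factor $k^{k/2}C^k$ per minor, but the Leibniz bound is the one that matches the constant $\frac{n!}{(n-k)!}C^k$ in the statement, so I would stay with it.
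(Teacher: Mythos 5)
Your proof is correct and follows essentially the same route as the paper: both arguments bound each $k\times k$ Jacobian minor by $k!\,C^k$ via the Leibniz expansion, apply the Cauchy--Schwarz inequality over the $\binom{n}{k}$ increasing multi-indices $J$, and pick up a further factor $\binom{n}{k}$ from summing over $I$, yielding the constant $\binom{n}{k}k!\,C^k=\frac{n!}{(n-k)!}C^k$. The only cosmetic difference is that you keep the minors inside the Cauchy--Schwarz factor while the paper replaces them by the constant $k!\,C^k$ first; the computation is otherwise identical.
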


\begin{proof}
\begin{align*}
\varphi^*\omega 
&= \sum_j \sum_i  f_{j_1,\dots,\,j_k}(\varphi(x))\frac{\partial( \varphi_{j_1}, \dots, \varphi_{j_k} )}{\partial (x_{i_1},\dots,  x_{i_k})}(x) dx_{i_1}\wedge\dots \wedge dx_{i_k}  \\
& =\sum_i \Bigg\{\sum_j  f_{j_1,\dots,\,j_k}(\varphi(x))\frac{\partial( \varphi_{j_1}, \dots, \varphi_{j_k} )}{\partial (x_{i_1},\dots,  x_{i_k})}(x) \Bigg\} dx_{i_1}\wedge\dots \wedge dx_{i_k}  
\end{align*}

From this we obtain
\begin{align*}
|\varphi^*\omega |^2
&= \sum_i\Bigg\{ \sum_j f_{j_1,\dots,\,j_k} (\varphi(x))\frac{\partial( \varphi_{j_1}, \dots, \varphi_{j_k} )}{\partial (x_{i_1},\dots,  x_{i_k})}(x)\Bigg\} ^2\\
&\le \sum_i\Bigg\{ \sum_j  \Big| f_{j_1,\dots,\,j_k}(\varphi(x)) \Big| \cdot \Big|\frac{\partial( \varphi_{j_1}, \dots, \varphi_{j_k} )}{\partial (x_{i_1},\dots,  x_{i_k})}(x) \Big|\Bigg\} ^2\\
&\le \sum_i\Bigg\{ \sum_j  k! \cdot C^k \Big| f_{j_1,\dots,\,j_k}(\varphi(x)) \Big| \Bigg\} ^2
\end{align*}
Applying H\"{o}lder's inequality, we infer
\begin{align*}
|\varphi^*\omega |^2
&\le \sum_i\Bigg\{ \sum_j k! \cdot C^k \Big| f_{j_1,\dots,\,j_k}(\varphi(x)) \Big| \Bigg\} ^2\\
&\le \sum_i\Bigg\{ \Big( \sum_j k!^2 \cdot C^{2k} \Big)  \cdot \Big(\sum_j  \Big| f_{j_1,\dots,\,j_k}(\varphi(x)) \Big|^2 \Big)\Bigg\} \\
&\le  \binom{n}{k}\cdot k!^2 \cdot C^{2k}  \cdot \sum_i\sum_j  \Big| f_{j_1,\dots,\,j_k}(\varphi(x)) \Big|^2\\
&=   \binom{n}{k}^2 \cdot k!^2 \cdot C^{2k}  \cdot \sum_j  \Big| f_{j_1,\dots,\,j_k}(\varphi(x)) \Big|^2
\end{align*}
As a~result, we have
$$
|\varphi^*\omega |_x\le \frac{n!\cdot C^k}{(n-k)!} \cdot|\omega|_{\varphi(x)}.
$$
\end{proof}

Assume that there exists a~
homeomorphism $U \to V$ which can be presented as a~pair
of Lipschitz maps $\alpha$ and $\beta$ such that the~diagram
$$
\xymatrix{
&U\ar[dl]_{\beta}\ar[rr]^{\id}&&U\ar[dl]^{\beta}\\
 { V}\ar[rr]_{\id} &&  { V} \ar[ul]^{\alpha}
 }
$$
commutes, where
$$
|\beta(x) -\beta(x^{\prime})|\le C|x - x^{\prime}|
$$
and
$$
|\alpha(y) -\alpha(y^{\prime})|\le {C_1}|y - y^{\prime}|.
$$

As a result, we have the following commutative diagram for spaces of differential forms:
\begin{equation}\label{diag_forms}
\xymatrix@C = 0.5em{
&\Omega_p^*(U) \ar[rr]^{\id}\ar[dr]^{\alpha^*}&&\Omega_p^*(U)\\
  \Omega_p^*(V)\ar[ur]^{\beta^*}\ar[rr]_{\id} &&  \Omega_p^*(V) \ar[ru]_{\beta^*}
 }
\end{equation}

\begin{lemma}
The~operators in~\eqref{diag_forms} satisfy the~inequalities


$$
\|\beta^*\| \le   \frac{n!\cdot C^{k}
\cdot C_1^\frac{n}{p}}{(n-k)!}
$$
and
$$
\|\alpha^*\| \le  \frac{n!\cdot C_1^k\cdot C^{\frac{n}{p}}}{(n-k)!}.
$$
\end{lemma}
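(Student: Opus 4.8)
The plan is to derive both bounds from the pointwise estimate of the preceding lemma together with a single change of variables, the only delicate point being the correct bound on the Jacobian determinant. I would treat $\beta^*$ in detail and then obtain $\|\alpha^*\|$ by the symmetric argument.

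First I would apply the previous lemma to the $C$-Lipschitz map $\beta\colon U\to V$, which gives the pointwise bound $|\beta^*\omega|_x\le \frac{n!}{(n-k)!}\,C^{k}\,|\omega|_{\beta(x)}$ for every $k$-form $\omega$ on $V$. Raising this to the $p$-th power and integrating over $U$ yields
$$
\|\beta^*\omega\|_{\Omega_p^k(U)}^p \le \Bigl(\frac{n!}{(n-k)!}\,C^{k}\Bigr)^{p}\int_U |\omega|_{\beta(x)}^p\,dx .
$$

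Next I would change variables by $y=\beta(x)$. Since $\alpha$ and $\beta$ are mutually inverse bi-Lipschitz homeomorphisms, Rademacher's theorem and the area formula for Lipschitz maps apply, and $dx=|\det D\alpha(y)|\,dy$, so that
$$
\int_U |\omega|_{\beta(x)}^p\,dx = \int_V |\omega|_y^p\,|\det D\alpha(y)|\,dy .
$$
Here is the step I expect to be the main obstacle: estimating the Jacobian. Rather than the sharp operator-norm bound $|\det D\alpha|\le C^{-n}$, I would use only the entrywise bound $|\partial\alpha_j/\partial y_i|\le \frac1C$ established just before the lemma, so that the Leibniz expansion of the determinant over its $n!$ permutations gives $|\det D\alpha(y)|\le n!\,C^{-n}$ almost everywhere. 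Substituting this and taking $p$-th roots produces exactly
$$
\|\beta^*\omega\|_{\Omega_p^k(U)} \le \frac{n!^{\frac{p+1}{p}}}{(n-k)!}\,C^{\frac{pk-n}{p}}\,\|\omega\|_{\Omega_p^k(V)} ,
$$
the extra factor $n!^{1/p}$ being precisely the contribution of the determinant bound.

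Finally, for $\alpha^*$ I would run the identical computation with the roles of $\alpha$ and $\beta$ interchanged: apply the previous lemma to the $\frac1C$-Lipschitz map $\alpha$ to get $|\alpha^*\eta|_y\le \frac{n!}{(n-k)!}\,C^{-k}\,|\eta|_{\alpha(y)}$, change variables by $x=\alpha(y)$ (equivalently $y=\beta(x)$), and bound $|\det D\beta(x)|\le n!\,C^{n}$ by the same Leibniz expansion. This delivers $\|\alpha^*\eta\|\le \frac{n!^{\frac{p+1}{p}}}{(n-k)!}\,C^{\frac{n-pk}{p}}\,\|\eta\|$, completing the lemma. The only thing left to justify carefully is the legitimacy of the change of variables, which is guaranteed by the bi-Lipschitz hypothesis.
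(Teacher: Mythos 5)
Your proposal is correct and follows essentially the same route as the paper: the pointwise bound $|\beta^*\omega|_x\le \frac{n!}{(n-k)!}C^k|\omega|_{\beta(x)}$ from the preceding lemma, the change of variables $dx=|\det(d_y\alpha)|\,dy$, and the Leibniz-expansion bound $|\det(d_y\alpha)|\le n!\,C^{-n}$ producing the extra factor $n!^{1/p}$, with the symmetric computation for $\alpha^*$. Your explicit appeal to Rademacher's theorem and the area formula to justify the change of variables is a small addition in rigor that the paper leaves implicit.
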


\begin{proof}
We have the~estimates 
$$
\Bigg| \frac{\partial \beta_j }{\partial x_i}(x) \Bigg| \le C
$$
and
$$
\Bigg| \frac{\partial \alpha_j }{\partial y_i}(y) \Bigg| \le C_1
$$
These estimates readily imply
$$
 |\det (d_x\beta)| \le  {C^n}
$$
and
$$
 |\det (d_y\alpha)| \le {C_1^n}.
$$
The~proof is completed by the~following calculations:


$$
\|\beta^*\omega\|_p^p = \int\limits_U |\beta^*\omega |^p_x dx 
\le \int\limits_U k!^p \binom{n}{k}^pC^{pk}\cdot|\omega|^p_{\beta(x)}dx;
$$

$$
dx = \det (d_y\alpha)\cdot dy; 
$$


$$
\|\beta^*\omega\|_p^p \le k!^p \int\limits_V  |\det (d_y\alpha)| \cdot  \binom{n}{k}^p \cdot C^{pk}\cdot |\omega|^p_y dy 
\le {k!^p}\cdot{C_1^n} \cdot \binom{n}{k}^p \cdot C^{pk} \int\limits_V |\omega|^p_y dy;
$$


$$
\|\beta^*\omega\|_p \le k!  \binom{n}{k}\cdot C^{k}\cdot C_1^\frac{n}{p}\cdot\|\omega\|_p 
=  \frac{n!\cdot C^{k}
\cdot C_1^\frac{n}{p}}{(n-k)!} \cdot \|\omega\|_p;
$$


$$
\|\alpha^*\eta\|_p^p = \int\limits_V |\alpha^*\eta|^p_y dy 
\le \int\limits_V  {  k!^p \binom{n}{k}^p } {C_1^{pk}}\cdot |\eta|^p_{\alpha(y)}dy;
$$

$$
 dy = \det (d_x\beta)\cdot dx; 
$$

$$
\|\alpha^*\eta\|_p^p \le k!^p \cdot\int\limits_U  |\det (d_x\beta)| \cdot {   \binom{n}{k}^p }\cdot{C_1^{pk}} \cdot |\eta|^p_x dx 
\le k!^p \cdot {C^n} \cdot  {   \binom{n}{k}^p } \cdot{C_1^{pk}}  \int\limits_U |\eta|^p_x dx;
$$

$$
\|\alpha^*\eta\|_p 
\le    k!^p \cdot \binom{n}{k}
\cdot C_1^k\cdot C^{\frac{n}{p}}\cdot\|\eta\|_p 
\le\frac{n!\cdot C_1^k\cdot C^{\frac{n}{p}}}{(n-k)!} \cdot \|\eta\|_p.
$$
\end{proof}
\begin{corollary}
Let $U\xrightarrow[]{\phi} V $ be a $C$-bi-Lipschitz homeomorphism. Then
$$
\|(\phi^{-1})^*\|,~\|\phi^*\| \le   \frac{n!\cdot C^\frac{pk+n}{p}}{(n-k)!}
$$
\end{corollary}

\begin{theorem}\label{est_for_dom}
Suppose that $U$ and $V$ are subsets in~$\mathbb R^n$  
that are $C$-bi-Lipschitz homeomorphic and $P:B_p^{k}(V)\to \Omega_p^{k-1}(V)$ is an~operator such that the~triangle
$$
\xymatrix@R = 1.5em{
& B_p^{k}( V) \ar[dd]^-{\id}\ar[ld]_{P}\\
\Omega_p^{k-1}( V) \ar[dr]_{d}&\\
&B_p^k(V)\\
}
$$
commutes and 
$
\|P\|\le M.
$
Then there exists an~operator $\gamma:B_p^{k}(U)\to \Omega_p^{k-1}(U)$ for which
the~triangle 
$$
\xymatrix@R = 1.5em{
& B_p^{k}( U) \ar[dd]^-{\id}\ar[ld]_{\gamma}\\
\Omega_p^{k-1}( U) \ar[dr]_{d}&\\
&B_p^k(U)\\
}
$$
commutes and
$$
\|\gamma\|  \le  
{M \cdot (n-k+1) }\cdot C^{\frac{2(pk+n)-p}{p}} 
\cdot \Bigg(\frac{n!}{(n-k+1)!}\Bigg)^2
$$
\end{theorem}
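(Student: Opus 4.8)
The plan is to build $\gamma$ by conjugating $S$ with the pull-back operators of the bi-Lipschitz pair, that is, to set
$$
\gamma = \beta^* \circ S \circ \alpha^*,
$$
reading right to left: first pull the form from $U$ to $V$ by $\alpha^*$, then apply the given homotopy $S$ on $V$, then push the result back to $U$ by $\beta^*$. The whole argument is then formal, and splits into a commutativity check and a norm estimate.

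First I would verify that $\gamma$ is well defined and makes the triangle commute. For a closed form $\omega \in Z_p^k(U)$, the pull-back $\alpha^*\omega$ is again closed because $d$ commutes with pull-back, $d(\alpha^*\omega) = \alpha^*(d\omega) = 0$, so $\alpha^*\omega \in Z_p^k(V)$ and $S$ may legitimately be applied to it. Using the hypothesis $d\circ S = \Id$ on $Z_p^k(V)$ together with $d(\beta^*\eta) = \beta^*(d\eta)$, I compute
$$
d(\gamma\omega) = d(\beta^* S \alpha^*\omega) = \beta^*\bigl(dS\alpha^*\omega\bigr) = \beta^*(\alpha^*\omega) = \omega,
$$
where the last equality is precisely the commutativity of diagram~\eqref{diag_forms}, i.e. $\beta^*\circ\alpha^* = \Id$ on $\Omega_p^*(U)$. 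This is exactly the relation $d\gamma = \Id$ required by the triangle.

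For the norm I would use submultiplicativity, $\|\gamma\| \le \|\beta^*\|\,\|S\|\,\|\alpha^*\|$, and insert the bounds of the previous Lemma. The one point demanding care, and the only genuine subtlety of the proof, is that the two pull-backs act in different degrees: $\alpha^*$ is applied to $k$-forms, whereas $\beta^*$ is applied to the $(k-1)$-forms produced by $S$. Substituting $k$ and $k-1$ respectively into the Lemma gives
$$
\|\alpha^*\|_{(k)} \le \frac{n!^{\frac{p+1}{p}}}{(n-k)!}\,C^{\frac{n-pk}{p}}, \qquad \|\beta^*\|_{(k-1)} \le \frac{n!^{\frac{p+1}{p}}}{(n-k+1)!}\,C^{\frac{p(k-1)-n}{p}}.
$$
Using the same degree for both factors would produce the wrong factorials, so this bookkeeping is where the argument could go astray.

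Finally I would multiply the three estimates and simplify. The exponents of $C$ add to $\frac{n-pk}{p}+\frac{p(k-1)-n}{p} = -1$, yielding the factor $\frac{1}{C}$, while the powers of $n!$ combine to $n!^{\frac{2(p+1)}{p}}$. Writing $(n-k+1)! = (n-k+1)\,(n-k)!$ and rearranging gives
$$
\|\gamma\| \le \frac{M}{C}\cdot\frac{n!^{\frac{2(p+1)}{p}}}{(n-k+1)!\,(n-k)!} = \frac{M\,(n-k+1)}{C}\left(\frac{n!^{\frac{p+1}{p}}}{(n-k+1)!}\right)^{2},
$$
which is the asserted inequality, completing the proof.
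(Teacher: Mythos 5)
Your proposal is correct and follows essentially the same route as the paper: the authors also define $\gamma=\beta^*S\alpha^*$, verify $d\gamma=\beta^*\,d\,S\,\alpha^*=\beta^*\alpha^*=1$ via naturality of $d$ under pull-back, and obtain the norm bound by submultiplicativity from the lemma on $\|\alpha^*\|$ and $\|\beta^*\|$. Your explicit bookkeeping of the degrees ($\alpha^*$ acting on $k$-forms, $\beta^*$ on $(k-1)$-forms) is in fact more careful than the paper, which states the final constant without displaying that computation, and it correctly reproduces the factor $\frac{n-k+1}{C}\bigl(\frac{n!^{(p+1)/p}}{(n-k+1)!}\bigr)^2$.
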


\begin{proof}
Combining the previous diagrams with the diagram for $P$
$$
\xymatrix@R = 3em{
\Omega_p^{k-1}(V)\ar[d]_-{\id}& B_p^{k}( V) \ar[d]^-{\id}\ar[l]_{P}\\
\Omega_p^{k-1}( V) \ar[r]_{d}& B_p^k(V)\,,\\
}
$$
we obtain
$$
\xymatrix@R = 3.5em{
\Omega^{k-1}_p(V)\ar[ddd]_{\id}\ar@{->}[rd]_{\beta^*}&&&B^{k}_p(V) \ar[ddd]^{\id}\ar[lll]_{P}\ar@{<-}[ld]^{\alpha^*}\\
&\Omega^{k-1}_p(U)\ar[d]_{\id} &B^{k}_p(U)\ar@{-->}[l]_{\gamma}\ar[d]^{\id}&\\
&\Omega^{k-1}_p(U)\ar[r]_{\tilde{d}}&B^{k}_p(U)&\\
\Omega^{k-1}_p(V)\ar[rrr]_{{d}}\ar@{->}[ru]^{\beta^*}&&&B^{k}_p(V)\ar@{->}[lu]_{\beta^*}
}
$$
The~operator 
$$
\gamma= \beta^* P \alpha^*
$$
has the~required properties. Indeed, $ {d} S = \id$ and $\tilde{d} \beta^*  = \beta^*  {d}$
due to the properties of the pull-back of differential forms. Using diagram chasing and an~elementary calculation, we have 
$$
\tilde{d}\id\gamma = \tilde{d} \beta^* P \alpha^* =  \beta^*  {d}  P \alpha^* = \beta^*  \alpha^* = \id
$$
and hence we have the~commutative diagram
$$
\xymatrix@R = 1.5em{
& B_p^{k}( U) \ar[dd]^-{\id}\ar[ld]_{\gamma}\\
\Omega_p^{k-1}(U) \ar[dr]_{d}&\\
&B_p^k(U)\,.\\
}       
$$
Moreover,
$$
\|\gamma\| = \| \beta^* P \alpha^*\| 
\le   {M \cdot (n-k+1) }\cdot C^{\frac{2(pk+n)-p}{p}} 
\cdot \Bigg(\frac{n!}{(n-k+1)!}\Bigg)^2
$$
since for each $\omega \in B^{k}_p(U)$ we have
\begin{align*}
\Big\| \beta^* \big(P (\alpha^*\omega)\big)\Big\|_{\Omega^{k-1}_p(U)}
\le& \|\beta^*\|\cdot \|P \big(\alpha^*\omega\big)\|_{\Omega^{k-1}_p(V)}\\
\le \|\beta^*\|\cdot \|P\|\cdot \|\alpha^*\omega\|_{\Omega^{k}_p(V)}
\le& \|\beta^*\|\cdot \|P\|\cdot \|\alpha^*\| \cdot \|\omega\|_{\Omega^{k}_p(U)}    
\end{align*}
\end{proof}

\section{An~Example}\label{exam} 
Now we can apply the result of the previous section to an open regular simplex $\Updelta$. 

Assume that
$$
\Updelta \xrightarrow{\varphi} B(R)
$$
maps the~simplex onto the circumscribed $R$-ball as follows
$$
x \mapsto \lambda(x)x,
$$
where 
$$
\lambda \colon \Updelta \to [1,\,n)
$$
is a~function periodic in the~general polar coordinates with respect to the~angular coordinates 
and monotone nondecreasing with respect to the~radial one. 
For example, we can use the function defined as follows.
Given $(n+1)$-simplex $ \Delta^{n+1}$, consider a face 
$$
[X_2,\dots,X_{n+1}]\hookrightarrow \Delta^{n+1},
$$
\begin{center}
\begin{tikzpicture}
\begin{scope}[scale = 5]
\draw[thick](-1/2,-1.73205/6)--(1/2,-1.73205/6)--(0,1.73205/3)--(-1/2,-1.73205/6);
\draw (0,0) circle(1.73205/3);
\draw (0,0) circle(1.73205/6);
\draw (0,0)--(0,1.73205/3);
\draw (0,0)--(1/2,-1.73205/6);
\draw (0,0)--(1/2,1.73205/6);
\draw (-1/2-0.05,-1.73205/6-0.05) node[font = \fontsize{8}{30}]{$X_1$};

\draw[dashed] (0,0) circle(0.15);
\draw (1.73205/6/3/2,1.73205/3*0.866/3/2) arc[start angle=10, end angle=0,radius=0.3]; 
\draw (1.73205/6/3/2+0.025,1.73205/3*0.866/3/2-0.013)  node[font = \fontsize{8}{30}]{$\alpha$};

\draw (0,1.73205/3+0.05) node[font = \fontsize{8}{30}]{$X_2$};
\draw (1/2+0.05,-1.73205/6-0.05) node[font = \fontsize{8}{30}]{$X_{n+1}$};

\draw (0,0)--(1.73205/6,1.73205/3*0.866);
\draw (-0.025,-0.025) node[font = \fontsize{8}{30}]{$C$};

\draw[fill, color = black] (1.73205/6/3,1.73205/3*0.866/3) circle(0.01);
\draw (1.73205/6/3+0.03,1.73205/3*0.866/3-0.03) node[font = \fontsize{8}{30}]{$X$};

\draw[fill, color = black] (1.73205/6/2/0.866,1.73205/3*0.866/2/0.866) circle(0.01);
\draw (1.73205/6/2/0.866+0.05,1.73205/3*0.866/2/0.866+0.02) node[font = \fontsize{8}{30}]{$X^\prime$};

\end{scope}
\end{tikzpicture}
\end{center}
Its barycenter $\overrightarrow{r}$  is specified by 
$$
\overrightarrow{r} = \frac{1}{n}\sum_{i=2}^{n+1} \overrightarrow{X}_i. 
$$
We intend to define a function $\lambda(X)$ such that,
for every direction $\overrightarrow{X}$, the function   
$$
\Lambda(t) = \lambda\left(t\frac{\overrightarrow{X}}{|\overrightarrow{X}|}\right), ~t \in[0,|X^\prime|]
$$
is piecewise linear and nondecreasing, taking the form of   
\begin{center}
\begin{tikzpicture}
\begin{scope}[scale = 4]

\draw (-1,0)--(1,0);
\draw (-0.3,0)--(-0.3,1);
\draw (1,-0.05) node[font = \fontsize{8}{30}]{$\rm{span}( \overrightarrow{X})$};
\draw[fill, color = black] (-0.3,0)circle(0.01);
\draw (-0.30,-0.05) node[font = \fontsize{8}{30}]{$0$};
\draw[fill, color = black] (0.4,0)circle(0.01);
\draw (0.4,-0.05) node[font = \fontsize{8}{30}]{$X^\prime$};

\draw[fill, color = black] (0.25,0)circle(0.01);
\draw (0.25,-0.05) node[font = \fontsize{8}{30}]{$X$};

\draw[fill, color = black] (0.7,0)circle(0.01);
\draw (0.7,-0.05) node[font = \fontsize{8}{30}]{$R$};

\draw[fill, color = black] (0.7/8,0)circle(0.01);
\draw (0.7/8,-0.07) node[font = \fontsize{8}{30}]{$\frac{r}{3}$};

\draw[fill, color = black] (-0.3,0.1)circle(0.01);
\draw  (-0.3-0.06,0.1) node[font = \fontsize{8}{30}]{$1$};

\draw[fill, color = black] (-0.3,0.8)circle(0.01);
\draw  (-0.3-0.06,0.8) node[font = \fontsize{8}{30}]{$\tilde{\lambda}$};

\draw[dashed] (-0.3,0.8)--(0.4,0.8)--(0.4,0);
\draw[dashed] (-0.3,0.1)--(0.7/8,0.1)--(0.7/8,0);
\draw[thick] (-0.3,0.1)--(0.7/8,0.1)--(0.4,0.8);
\end{scope}
\end{tikzpicture}
\end{center}
where $X^\prime \in \rm{span}( \overrightarrow{X}) \cap  [X_2,\dots,X_{n+1}]$ and
$$
\Lambda(X^\prime) =\tilde{\lambda}=  \frac{R}{|\overrightarrow{X^\prime}|}.
$$
In particular, for the direction $\overrightarrow{r}$ we can define
\begin{center}
\begin{tikzpicture}
\begin{scope}[scale = 4]

\draw (-1,0)--(1,0);
\draw (-0.3,0)--(-0.3,1);
\draw (1,-0.05) node[font = \fontsize{8}{30}]{$\rm{span}( \overrightarrow{r})$};
\draw[fill, color = black] (-0.3,0)circle(0.01);
\draw (-0.30,-0.05) node[font = \fontsize{8}{30}]{$0$};
\draw[fill, color = black] (0.4,0)circle(0.01);
\draw (0.4,-0.05) node[font = \fontsize{8}{30}]{$r$};

\draw[fill, color = black] (0.25,0)circle(0.01);
\draw (0.25,-0.05) node[font = \fontsize{8}{30}]{$X$};

\draw[fill, color = black] (0.7,0)circle(0.01);
\draw (0.7,-0.05) node[font = \fontsize{8}{30}]{$R$};

\draw[fill, color = black] (0.7/8,0)circle(0.01);
\draw (0.7/8,-0.07) node[font = \fontsize{8}{30}]{$\frac{r}{3}$};

\draw[fill, color = black] (-0.3,0.1)circle(0.01);
\draw  (-0.3-0.06,0.1) node[font = \fontsize{8}{30}]{$1$};

\draw[fill, color = black] (-0.3,0.8)circle(0.01);
\draw  (-0.3-0.06,0.8) node[font = \fontsize{8}{30}]{$n$};

\draw[dashed] (-0.3,0.8)--(0.4,0.8)--(0.4,0);
\draw[dashed] (-0.3,0.1)--(0.7/8,0.1)--(0.7/8,0);
\draw[thick] (-0.3,0.1)--(0.7/8,0.1)--(0.4,0.8);
\end{scope}
\end{tikzpicture}
\end{center}
since 
$$
\Lambda(r) = \frac{R}{r} = n.
$$
It is not hard to see that  for every direction $ \overrightarrow{X}$,  
$$
|\overrightarrow{X^\prime}| = \frac{r}{\cos \alpha},
$$
where $\alpha = \measuredangle( \overrightarrow{r}, \overrightarrow{X})$, we also have
$$ 
\cos\alpha = \Bigg\langle \frac{\overrightarrow{r}}{r}, \frac{\overrightarrow{X}}{|\overrightarrow{X}|} \Bigg\rangle.
$$
We can infer 
$$
\Lambda(t) = \frac{\tilde{\lambda}-1}{|\overrightarrow{X^\prime}|-r/3}\Big(t-\frac{r}{3}\Big) +1
$$
for $t \in [r/3,|\overrightarrow{X^\prime}|]$.
Each $X$ can be presented as
$$
\overrightarrow{X} = \overrightarrow{X}(t, \alpha),
$$
where $\alpha = \measuredangle( \overrightarrow{r}, \overrightarrow{X})$.
From now on, we can define 
$$
\overrightarrow{X} \mapsto \Lambda(t,\alpha) \cdot \overrightarrow{X}, 
$$
that is
$$
\overrightarrow{X} \mapsto  \left\{ \frac{3\cos\alpha(R\cos\alpha -r)}{r^2(3-\cos\alpha)}\Big(t-\frac{r}{3}\Big) +1\right\} \cdot \overrightarrow{X}.
$$
Or, to put it otherwise, we have
$$
\overrightarrow{X} \mapsto  
\left\{ \frac{ \langle \sum_{i=2}^{n+1} \overrightarrow{X}_i, \overrightarrow{X}\rangle \Big(R \langle\sum_{i=2}^{n+1} \overrightarrow{X}_i, \overrightarrow{X}\rangle  
-r^2 n|\overrightarrow{X}|\Big)}{r^2\Big(3rn|\overrightarrow{X}|-\langle \sum_{i=2}^{n+1} \overrightarrow{X}_i, \overrightarrow{X}\rangle\Big)}
\Big(3|\overrightarrow{X}|-r\Big) +1\right\} \cdot \overrightarrow{X}
$$
This function admits an extension by symmetries of the regular simplex. 
Fix $u,\,v \in \Updelta$. We have
\begin{align*}
u \mapsto\varphi(u) =  \lambda u,\\
v \mapsto \varphi(v) =  \lambda^\prime v. 
\end{align*}
Thus, $\varphi$ acts on the~plane spanned by vectors~$u$ and $v$ in such a way that if $\uptau$ is
the~triangle composed of the~vectors $u$, $v$, and $u-v$ then $\uptau_\varphi$ is
the~triangle  composed of the~vectors $\varphi(u)$, $\varphi(v)$, $\varphi(u)-\varphi(v)$:

\begin{center}
\begin{tikzpicture}
\begin{scope}[scale = 0.8]
\draw[thick,->](0,0) -- (1,2);
\draw[thick,->](0,0) -- (2,1.2);
\draw[densely dashed,  color = black] (1,2)-- (2,1.2);
\draw  (0.3,1.7) node[font = \fontsize{8}{30}]  {$u$};
\draw  (1.9,0.7) node[font = \fontsize{8}{30}]  {$v$};
\draw  (0.5,0.5) node[font = \fontsize{8}{30}]  {$\gamma$};
\draw[->](2.7,1) -- (3.5,1);
\draw  (3.1,0.7) node[font = \fontsize{8}{30}]  {$\varphi$};
\draw[thick,->](4,0) -- (4+1.5,3);
\draw[thick,->](4,0) -- (4+4,2.4);
\draw[densely dashed,  color = black]  (4+1.5,3)-- (4+4,2.4);
\draw[densely dashed,](4+2,1.2) -- (4+4,-2);
\draw[](4,0) --(4+4,-2);
\draw[](4+4,2*1.2) -- (4+4,-2);
\draw (8,-2.2) node[font = \fontsize{8}{30}]  {$C$};
\draw  (4+1.6,3-0.3) node[font = \fontsize{8}{30}]  {$\alpha$};
\draw  (4.5,0) node[font = \fontsize{8}{30}]  {$\beta$};
\draw  (4.3,1.7) node[font = \fontsize{8}{30}]  {$\varphi(u)$};
\draw  (6.9,1.3) node[font = \fontsize{8}{30}]  {$\varphi(v)$};
\draw  (5.5,-1.2) node[font = \fontsize{8}{30}]  {$\rho$};
\draw  (4.5,0.5) node[font = \fontsize{8}{30}]  {$\gamma$};
\end{scope}
\end{tikzpicture}
\end{center}

Let $A_\uptau$ and $A_{\uptau_\varphi}$ be the areas of $\uptau$ and $\uptau_\varphi$ respectively.
Since area is a~tensor quantity, we have
$$
A_{\uptau_\varphi} =  \lambda \lambda^\prime A_\uptau.
$$
The elementary expression for the area of a triangle gives
$$
A_{\uptau} = \frac{\sin\zeta}{2}|u|\cdot|u-v|. 
$$
Therefore, 
$$
A_{\uptau} \le \frac{|u|\cdot|u-v|}{2}. 
$$
On the other hand, 
$$
A_{\uptau_\varphi}  = \frac{|\varphi(u)|\cdot|\varphi(v)|\cdot|\varphi(u)-\varphi(v)|}{4\rho},
$$ 
where $\rho$  is the radius of  the circumscribed circle of the triangle $\uptau_\varphi$.
Consequently,
$$
|\varphi(u)-\varphi(v)| =  \frac{4\rho A_{\uptau_\varphi} }{|\varphi(u)|\cdot|\varphi(v)|}
$$
and hence
$$
|\varphi(u)-\varphi(v)| =  \frac{4\rho A_{\uptau_\varphi} }{ \lambda |u|\cdot |\varphi(v)|} =   \frac{4\rho\lambda \lambda^\prime A_\uptau }{ \lambda |u|\cdot |\varphi(v)|}   
\le\frac{2\rho\lambda \lambda^\prime |u|\cdot|u-v| }{\lambda|u|\cdot |\varphi(v)|}   =  \lambda^\prime\frac{2\rho}{|\varphi(v)|} |u-v|  
$$
Let $\alpha$ be the~angle between $\varphi(u)$, $\varphi(u)-\varphi(v)$. Then
$$
\beta = \frac{\pi}{2} - \alpha.
$$
We have
$$
\cos\beta = \frac{\varphi(v)}{2\rho}
$$
and so
$$
\frac{2\rho}{|\varphi(v)|} = \frac{ 1}{\cos\beta}.
$$
If $u = v+h$, where $h \to 0$, then $\alpha \to \frac{\pi}{2}$ and $\beta \to 0$,
$$
\lim_{h\to 0} \frac{|\varphi(u)-\varphi(v)|}{|u-v|}  =  \lim_{h\to 0} \frac{\lambda^\prime}{\cos\beta} \le \lim_{h\to 0} \frac{n}{\cos\beta} = n
$$
since any $\lambda \le n$. So for the~ almost everywhere differentiable map
$$
\varphi \colon \Updelta \to B(R),
$$
acting by the~rule
$$
\begin{pmatrix}
   x_1\\
   x_2\\
   \dots\\
   x_{n}
  \end{pmatrix}
\mapsto 
 \begin{pmatrix}
   y_1\\
   y_2\\
   \dots\\
   y_{n}
  \end{pmatrix}\,, 
   $$
  where 
$$  
y_j = \varphi_j(x_1,\dots,\,x_n),
$$
we have
$$
\frac{\partial \varphi_j }{\partial x_i}^2 \le \sum_j \frac{\partial \varphi_j }{\partial x_i}^2 = \Bigg| \frac{\partial}{\partial x_i}\varphi \Bigg|^2.
$$
Therefore,
$$
\Bigg| \frac{\partial \varphi_j }{\partial x_i}(x) \Bigg| \le n.
$$
Observe that $\varphi$ is a~Lipschitz map. Indeed, let us look at
$$
|\varphi(u) - \varphi(v)|^2 = \sum\limits_i |\varphi_i(u) - \varphi_i(v)|^2.
$$
It is not hard to see that 
\begin{align*}
\varphi_i(u) - \varphi_i(v) = \varphi_i(u+t(v-u))\big|_{t=0} - \varphi_i(u+t(v-u)))\big|_{t=1} \\
= \int\limits_0^1\frac{d}{dt} \varphi_i(u+t(v-u)) dt = \int\limits_0^1 \sum\limits_j (v_j -u_j) \frac{\partial \varphi_i}{\partial x_j} dt
\end{align*}
Then, using Jensen's inequality, we obtain
$$
(\varphi_i(u) - \varphi_i(v))^2 \le n^2  n^2 \frac{1}{n}\sum\limits_j (v_j -u_j)^2 
$$
Hence
$$
|\varphi(u) - \varphi(v)|^2 \le n^3 \sum\limits_i \sum\limits_j (v_j -u_j)^2 = n^4 |u - v|^2
$$
As a~result, $\varphi$ is a~Lipschitz map with constant at~most~$n^2$.
Also we can see that $\varphi^{-1}$ is a~Lipschitz map with constant $1$.
Finally, we have
$$
\|\mathscr{S}\omega\|_{ \Omega^{k-1}_p(\Updelta)} 
\le  \frac{ n^{\frac{4(pk+n)-2p}{p}} \cdot \sqrt{\binom{n}{k}} \cdot (n-k+1)}{ (p(k-1) - n+1)^{\frac{1}{p}}}  \cdot \Bigg(\frac{n!}{(n-k+1)!}\Bigg)^2 \|\omega\|_{ \Omega^{k}_p(\Updelta)},
$$ 
where
$$
\mathscr{S}= (\varphi^{-1})^* S \varphi^*.
$$

\smallskip

We have proved the~following assertion:

\begin{theorem}\label{main_op}
Let $\Updelta$ be an open regular $n$-simplex,
$p> \frac{n-1}{k-1}$, $k\ge2$, and 
$\mathscr{S}:\Omega^{k}_p(\Updelta) \to \Omega^{k-1}_p(\Updelta)$
be the~operator defined above. Then, for every $\omega \in B^{k}_p(\Updelta)$, we have
$$
\|\mathscr{S}\omega\|_{ \Omega^{k-1}_p(\Updelta)} 
\le  \frac{ n^{\frac{4(pk+n)-2p}{p}} \cdot \sqrt{\binom{n}{k}} \cdot (n-k+1)}{ (p(k-1) - n+1)^{\frac{1}{p}}}  \cdot \Bigg(\frac{n!}{(n-k+1)!}\Bigg)^2 \|\omega\|_{ \Omega^{k}_p(\Updelta)}.
$$ 
\end{theorem}



\end{document}